\crefname{hypothesis}{Hypothesis}{Hypotheses}
\title{What is the gradient of a scalar function of a symmetric matrix ? \thanks{Updated \today.
\funding{SS thanks Los Alamos National Laboratory (US) for funding through \#20170508DR  while NP was funded by  \#20060599ECR \& ASC-Directive}}}
\author{
Shriram Srinivasan\thanks{Center for Nonlinear Studies \& Computational Earth Science Group (EES-16), Los Alamos National Laboratory, Los Alamos, NM 87545 (\email{shrirams@lanl.gov}, \url{https://cnls-www.lanl.gov/External/people/Shriram_Srinivasan.php}).}
\and 
Nishant Panda \thanks{Applied Mathematics Group (T-5), Theoretical Division, Los Alamos National Laboratory, Los Alamos, NM 87545 (\email{nishpan@lanl.gov}).}
}
\newcommand{\frechet}{Fr\'{e}chet }
\DeclareMathOperator{\fvec}{\mathrm{vec}}
\DeclareMathOperator{\hvec}{\mathrm{vech}}
\DeclareMathOperator{\mat}{\mathrm{mat}}
\newcommand{\norm}[1]{\left\lVert#1\right\rVert}
\newcommand{\abs}[1]{\left\lvert#1\right\rvert}
\newcommand{\ip}[2]{\left\langle #1, #2\right \rangle}
\newcommand{\nullspace}[1]{\mathrm{Null}\,{#1}}
\newcommand{\R}[2]{\mathbb{R}^{#1 \times #2}}
\renewcommand{\S}[2]{\mathbb{S}^{#1 \times #2}}
\DeclareMathOperator*{\tr}{tr\,}
\begin{document}

\maketitle

\begin{abstract}
  Perusal of research articles in the statistics and electrical engineering communities that deal with the topic of matrix calculus reveal a  different approach to calculation of the gradient of a real-valued function of a symmetric matrix.  In contrast to the standard approach wherein the gradient is calculated using the definition of a \frechet derivative for matrix functionals,  there is a notion of the gradient that  explicitly takes into account the symmetry of the matrix, and this ``symmetric gradient" $G_s$ is reported to be related to the gradient $G$ which is computed by ignoring symmetry as $G_s = G + G^T - G \circ I$, where $\circ$ denotes the elementwise Hadamard product of the two matrices. The idea of the ``symmetric gradient" has now appeared  in several publications, as well as in textbooks and handbooks on matrix calculus which are often cited in the context of machine-learning. After setting up the problem in a finite-dimensional inner-product space, we demonstrate rigorously  that  $G_s = (G + G^T)/2$ is the correct relationship. Moreover, our derivation exposes that it is an incorrect lifting from the Euclidean space to the space of symmetric matrices, inconsistent with the underlying inner-product, that leads to the spurious result.
\end{abstract}

\begin{keywords}
  matrix calculus, symmetric matrix, \frechet derivative, gradient, matrix functional
\end{keywords}

\begin{AMS}
  15A60, 15A63, 26B12
\end{AMS}

\section{Introduction}

Matrix functionals defined over an inner-product space of square matrices  are a common construct in applied mathematics. In most cases, the object of interest is not the matrix functional itself, but its derivative or gradient (if it be differentiable), and this notion is unambiguous.  The \frechet derivative, see for e.g.~\cite{Munkres1991} and ~\cite{cheney2013analysis}, being a linear functional readily yields the definition of the gradient via the Riesz Representation Theorem. 

However, there is a sub-class of matrix functionals that frequently occurs in practice whose argument is a \emph{symmetric} matrix.  Such functionals and their gradients occur in the analysis and control of dynamical systems which are described by matrix differential equations \cite{Athans1965Nov}, maximum likelihood estimation in statistics, econometrics and machine-learning  \cite{MagnusNeudecker1988}, and in the theory of elasticity and continuum thermodynamics \cite{GurtinFriedAnand2010,Ogden1997}.
For this sub-class of matrix functionals with a symmetric matrix argument, there appears to be two approaches to define the gradient that lead to different results.

By working with the definition of the \frechet derivative over the vector space of square matrices and specializing it to that of the  symmetric matrices which are a proper subspace, the gradient (denoted by $G_{sym}$ for convenience) can be obtained as described in \cite{Itskov2019}.
However, a different idea emerged through matrix calculus  as practiced by the statistics and control systems community -- that of a ``symmetric gradient". The root of this idea is the fact that while the space of square matrices in $\R{n}{n}$ has dimension $n^2$, the subspace of symmetric matrices has a dimension of $n(n+1)/2$. The second approach aims to explicitly take into account the symmetry of the matrix elements, and view the matrix functional as one defined on the vector space $\mathbb{R}^{n(n+1)/2}$, compute its gradient in this space before finally reinterpreting it as a symmetric matrix (the ``symmetric gradient" $G_s$) in $\R{n}{n}$. However, the gradients computed  by the two different methods, $G_{sym}, G_s$ are not equal. The question raised in the title of this article refers to this dichotomy.

A perusal of the literature reveals  how the idea of the ``symmetric gradient" came into being among the community of statisticians and electrical engineers that dominantly used matrix calculus.
Early work in statistics  in 1960s such as \cite{Dwyer1967Jun,Tracy1969Dec} does not make any mention of a need for special formulae to treat the case of a symmetric matrix, but does note that all the matrix elements must be ``functionally independent". The notion of ``independence" of matrix elements was a recurring theme, and symmetric matrices, by dint of equality of the off-diagonal elements violated this condition.  Gebhardt \cite{Gebhardt1971Jun} in 1971 seems to have been the first to remark that the derivative formulae do not consider symmetry explicitly but he concluded that no adjustment was necessary in his case since the gradient obtained was already symmetric. Tracy and Singh \cite{Tracy1975Apr} in 1975  echo the same sentiments as Gebhardt about the need for special formulae. By the end of the decade, the ``symmetric gradient" makes its appearance in some form or the other in the work of Henderson \cite{HendersonH.SearleEtAl1979} in 1979, a review by Nel \cite{Nel1980Jan}, and a book by Rogers \cite{Rogers1980} in 1980. McCulloch \cite{McCulloch1980Mar} proves the expression for ``symmetric gradient" that we quote here and notes that it applies to calculating derivatives with respect to variance-covariance matrices, and thus derives the information matrix for the multivariate normal distribution. By 1982, the ``symmetric gradient" is included in the authoritative and influential textbook by Searle \cite{Searle1982}. Today the idea is firmly entrenched as evidenced by the books \cite{Harville1997,Seber2008,Mathai1997} and the notes by Minka \cite{minka_notes}.

The idea of the ``symmetric gradient" seems to have  come up in the control systems community (as represented by publications in IEEE) as an offshoot of the extension of the Pontryagin Maximum principle for matrix of controls and states when  Athans and Schweppe \cite{Athans1965Nov} remark that the formulae for gradient matrices are derived under the assumption of
``functional independence" of matrix elements. 
Later, they warned \cite{Athans1967, Athans1967Nov} that special formulae were necessary for symmetric matrices.
Geering \cite{Geering1976Aug} in 1976 exhibited an example calculation (gradient of the determinant of a symmetric 2 x 2 matrix) to justify the definition of a ``symmetric gradient". We shall show that his reasoning was flawed.
Brewer \cite{Brewer1977Apr} in 1977 remarked that the  formulae for gradient matrices in \cite{Athans1965Nov,Athans1967Nov,Athans1967} can only be applied when the elements of the matrix are independent, which is not true for a symmetric matrix, and so  proceeded to  derive a general formula  for the ``symmetric gradient"  (identical to McCulloch \cite{McCulloch1980Mar}) through the rules of matrix calculus for use in  sensitivity analysis of optimal estimation systems. 
The same flaw underpins the example calculation of Geering \cite{Geering1976Aug} and the  putative ``proof" of the expression for $G_s$ derived by Brewer \cite{Brewer1977Apr}. 
Following on from these works, it appears in other instances \cite{Walsh1977Dec,Yanchevsky1981Jan,Parring1992Nov,vanHessem2003}.

At present, the ``symmetric gradient" formula is also recorded in \cite{matrix_cookbook}, a handy reference for engineers and scientists working on inter-disciplinary topics using statistics and machine-learning, and  the formula's appearance in \cite{Murray2016Feb} shows that it is no longer restricted to a particular community of researchers.

Thus, both notions of the gradient are well-established, and hence the fact that these two notions do not agree is a source of enormous confusion for researchers who straddle application areas, a point to which  the authors can emphatically attest to. 
On the popular site Mathematics Stack Exchange, there are multiple questions (for example \cite{2436680,1981210,2131708,2508276,2816512,3005374,982386}) related to this theme, but their answers deepen and misguide rather than alleviate the existing confusion.
Depending on the context, this disagreement between the two notions of gradient has implications that range from serious to none. 
In the context of extremizing a matrix functional, such as when calculating a maximum likelihood estimator, both approaches yield the same critical point. 
However, if the gradient be used in an optimization routine such as for steepest descent, one of the gradients is clearly not the steepest descent direction, and that will lead to sub-optimal convergence. 
Indeed, since these two are the most common contexts, the discrepancy probably escaped scrutiny until now. 
However, if the gradient itself be the quantity of interest, the discrepancies are a serious issue. Such is the case in physics and mechanics since  gradients of matrix functionals are used to describe physical quantities like stress and strain in a body.

We note that the idea to eliminate the redundant degrees of freedom in a symmetric matrix is not a recent one; it dates back to Kelvin (1856) \cite{kelvin1856} and Voigt (1910) \cite{voigt1910} who proposed it in the context of the theory of elasticity to transform  second-order symmetric tensors and fourth-order tensors into  column vectors and 6 x 6 matrices respectively. However, the path that leads to the spurious ``symmetric gradient'' was not taken.

In this article, we rigorously formulate the calculation of the ``symmetric gradient" in its natural setting of finite-dimensional inner-product spaces. 
A careful evaluation led us to the comforting conclusion that both approaches actually lead to the same gradient. We infer that the problem arises because the established result for $G_s$ in $\R{n}{n}$ is a misinterpretation of the gradient in $\mathbb{R}^{n(n+1)/2}$. In other words, the lifting from $\mathbb{R}^{n(n+1)/2}$ to $\R{n}{n}$ is incorrect. When interpreted correctly, we are inexorably led to $G_s = G_{sym}$.

That finally brings us to the most important reason  for writing this article, which is that derivatives and gradients are  fundamental ideas, and there should not be any ambiguity about their definitions. Thus, we felt the urgent need to clarify the issues muddying the waters, and show that the ``symmetric gradient", when calculated correctly, leads to the expected result.

The paper is organized as follows: after stating the problem, we begin with two illustrative examples in \Cref{sec:problem_formulation} that allow us to see concretely what we later prove in the abstract.  After that,  \Cref{sec:preliminaries} lays out all the machinery of linear algebra that we shall need, ending with the proof of the main result.

\section{Problem formulation}
\label{sec:problem_formulation}
To fix our notation, we introduce the following. We denote by  $\S{n}{n}$ the subspace  of all symmetric matrices in $\R{n}{n}$. The space $\R{n}{n}$ (and subsequently  $\S{n}{n}$) is an inner product space with the following \emph{natural} inner product $\ip{\cdot}{\cdot}_F$. 
\begin{definition}
\label{def:frobenius}
For two matrices $A, B$ in $\R{n}{n}$
$$\ip{A}{B}_F := \tr(A^TB)$$ defines an inner product and induces the Frobenius norm on $\R{n}{n}$ via 
$$\norm{A}_F := \sqrt{\tr(A^TA)}.$$
\end{definition}

\begin{corollary}
We collect a few useful facts about the inner product defined above essential for this paper.
\label{cor:orthogonality}
\begin{enumerate}
\item For $A$ symmetric, B skew-symmetric in $\R{n}{n}$, $\ip{A}{B}_F = 0$
\item If $\ip{A}{H}_F = 0$ for any $H$ in $\mathbb{S}^{n \times n}$, 
then the symmetric part of $A$ given by $\mathrm{sym}(A):= (A + A^T)/2$ is equal to $0$
\item For $A$ in $\R{n}{n}$ and $H$ in $\S{n}{n}$, 
$\ip{A}{B}_F =  \ip{\mathrm{sym}(A)}{H}_F$
\end{enumerate}
\end{corollary}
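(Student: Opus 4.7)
My plan is to prove the three statements sequentially, using each one to derive the next, since parts (2) and (3) both naturally rely on part (1). The whole corollary really hinges on the orthogonal decomposition $\R{n}{n} = \S{n}{n} \oplus \mathrm{Skew}(n)$ with respect to the Frobenius inner product, and that decomposition is exactly what part (1) gives us.

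For part (1), I would compute $\ip{A}{B}_F = \tr(A^T B)$ directly and invoke two basic properties of the trace: invariance under transposition, $\tr(M) = \tr(M^T)$, and the cyclic property (or more simply, using the symmetry/skew-symmetry assumption). Writing $\tr(A^T B) = \tr(A B)$ because $A$ is symmetric, and then $\tr(A B) = \tr((AB)^T) = \tr(B^T A^T) = \tr(-B A) = -\tr(AB)$, forces the trace to be zero. Alternatively, expanding in components, $\sum_{i,j} A_{ij} B_{ij}$, relabeling via $A_{ij} = A_{ji}$ and $B_{ij} = -B_{ji}$ shows the sum equals its own negative. Either route is routine; I would pick whichever reads cleaner.

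For part (3), I would write $A = \mathrm{sym}(A) + \mathrm{skew}(A)$ with $\mathrm{skew}(A) := (A - A^T)/2$, use bilinearity of the inner product, and apply part (1) with the skew-symmetric matrix $\mathrm{skew}(A)$ and the symmetric matrix $H$ to kill the second term. (I will also silently correct the typo in the statement — the second argument should read $H$, not $B$.) This is a one-line argument once part (1) is available.

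For part (2), the plan is to leverage part (3) first: since $\ip{A}{H}_F = \ip{\mathrm{sym}(A)}{H}_F$ for every symmetric $H$, the hypothesis $\ip{A}{H}_F = 0$ becomes $\ip{\mathrm{sym}(A)}{H}_F = 0$ for all $H \in \S{n}{n}$. Now choose the specific symmetric test matrix $H = \mathrm{sym}(A)$, obtaining $\|\mathrm{sym}(A)\|_F^2 = 0$, from which $\mathrm{sym}(A) = 0$ follows by the definiteness of the Frobenius norm. I do not anticipate any real obstacle here; the only mild subtlety is making sure to present the three parts in a dependency-consistent order (proving (1), then (3), then using (3) to derive (2)), rather than the order in which they are listed.
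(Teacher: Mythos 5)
Your proposal is correct and complete. The paper itself does not prove this corollary---it simply cites a standard reference---so there is no in-text argument to compare against; your route (the trace identity $\tr(AB)=-\tr(AB)$ for part (1), the orthogonal decomposition $A=\mathrm{sym}(A)+\mathrm{skew}(A)$ for part (3), and the test matrix $H=\mathrm{sym}(A)$ for part (2)) is exactly the standard textbook argument being invoked, and your reordering of the parts to respect the logical dependencies, as well as your catch of the typo $\ip{A}{B}_F$ for $\ip{A}{H}_F$ in part (3), are both sound.
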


\begin{proof}
  See, for e.g.~\cite{roman2005advanced}.
\end{proof}


Consider a real valued function $\phi: \R{n}{n} \longrightarrow \mathbb{R}$. We say that $\phi$ is differentiable if its \frechet derivative, defined below, exists.
\begin{definition}
\label{def:frechet_original}
The \frechet derivative of  $\phi$ at $A$ in $\R{n}{n}$ is the unique linear transformation $\mathcal{D}\phi(A)$ in $\R{n}{n}$ such that
\begin{displaymath}
\lim_{\norm{H} \rightarrow 0} \; 
\dfrac{\abs{\phi(A + H) - \phi(A) - \mathcal{D}\phi(A)[H]}}{\norm{H}} \rightarrow 0,
\end{displaymath}
for any $H$ in $\R{n}{n}$. 
The Riesz Representation theorem then asserts the existence of the gradient $\nabla \phi(A)$ in $\R{n}{n}$ such that
\begin{displaymath}
\ip{\nabla\phi(A)}{H}_F = \mathcal{D}\phi(A)[H]
\end{displaymath}
\end{definition}

Note that if $A$ is a symmetric matrix, then by the \frechet derivative defined above, the gradient $\nabla \phi(A)$ is not guaranteed to be symmetric. Also, observe that the dimension of $\S{n}{n}$ is $m = n(n+1/2)$, hence, it is natural to identify $\S{n}{n}$ with $\mathbb{R}^m$. The \emph{reduced dimension} along with the fact that \Cref{def:frechet_original} doesn't account for the symmetry of the matrix  argument of $\phi$  served as a motivation to define a  ``symmetric gradient" in $\R{n}{n}$ to account for the symmetry in $\S{n}{n}$. 

\begin{claim}
\label{claim:grad_sym}
Let $\phi: \R{n}{n} \longrightarrow \mathbb{R}$, and let $\phi_{sym}$ be the real-valued function that is the restriction of $\phi$ to $\S{n}{n}$, i.e., $\phi_{sym}:= \phi\big|_{\S{n}{n}}\;  \S{n}{n} \longrightarrow \mathbb{R}$. 
Let $G$ be the gradient of $\phi$ as defined in \Cref{def:frechet_original}. Then  $G^{claim}_{s}$ is the linear transformation in $\S{n}{n}$ that is claimed to be the ``symmetric gradient" of $\phi_{sym}$ and related to the gradient $G$ as follows \cite{McCulloch1980Mar,Harville1997,Seber2008,matrix_cookbook}
\begin{displaymath}
G^{claim}_{s}(A) = G(A) + G^T(A) - G(A) \circ I,
\end{displaymath}
where $\circ$ denotes the element-wise Hadamard product of $G(A)$ and the identity $I$.
\end{claim}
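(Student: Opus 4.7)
I would approach this claim through the chain-rule derivation that is standard in the references cited alongside the statement. The idea is to exploit the fact that a symmetric matrix is determined by its $n(n+1)/2$ entries on and above the diagonal, so that $\phi_{sym}$ can be recast as an ordinary multivariate function on $\mathbb{R}^{n(n+1)/2}$ and differentiated coordinate by coordinate.

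Concretely, I would introduce a parametrization $\Psi\colon \mathbb{R}^{n(n+1)/2}\to \S{n}{n}$ sending the independent scalars $\{a_{ij}: i\leq j\}$ to the symmetric matrix whose upper triangle they fill, mirrored into the lower triangle. Writing $\tilde\phi = \phi\circ\Psi$ and applying the ordinary multivariable chain rule, a diagonal parameter $a_{ii}$ affects only the single matrix entry $A_{ii}$, yielding $\partial\tilde\phi/\partial a_{ii}=G_{ii}(A)$, where $G_{ij}(A)$ denotes the $(i,j)$ entry of the gradient $G$ from \Cref{def:frechet_original}. An off-diagonal parameter $a_{ij}$ with $i<j$ simultaneously drives the two entries $A_{ij}$ and $A_{ji}$, so $\partial\tilde\phi/\partial a_{ij}=G_{ij}(A)+G_{ji}(A)$. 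To assemble these partials into a matrix $G^{claim}_s(A)\in\S{n}{n}$, I would place $\partial\tilde\phi/\partial a_{ii}$ on the diagonal and $\partial\tilde\phi/\partial a_{ij}$ symmetrically in both the $(i,j)$ and $(j,i)$ off-diagonal positions. An entry-by-entry check then matches the claimed formula: on the diagonal $(G+G^T-G\circ I)_{ii}=2G_{ii}-G_{ii}=G_{ii}$, while off the diagonal the Hadamard-with-$I$ term vanishes and $(G+G^T)_{ij}=G_{ij}+G_{ji}$, exactly reproducing the chain-rule output.

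The main obstacle, and the point where I expect the derivation to be challenged by the paper, is precisely this final repackaging step. The chain-rule partials live naturally in $\mathbb{R}^{n(n+1)/2}$ under its canonical inner product, whereas $G^{claim}_s$ is supposed to be a Riesz representative of $\mathcal{D}\phi_{sym}(A)$ relative to the Frobenius inner product on $\S{n}{n}$. Expanding $\ip{G^{claim}_s(A)}{H}_F = \tr\!\bigl(G^{claim}_s(A)^T H\bigr)$ for a symmetric $H$ double-counts every off-diagonal contribution, while the linear functional $\mathcal{D}\phi_{sym}(A)[H] = \sum_{i\leq j}(\partial\tilde\phi/\partial a_{ij})\,H_{ij}$ counts each such contribution only once. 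This mismatch is exactly the ``incorrect lifting'' flagged in the introduction, and I would expect a careful bookkeeping against \Cref{cor:orthogonality}(3), which identifies $\ip{G}{H}_F$ with $\ip{\mathrm{sym}(G)}{H}_F$ on symmetric test matrices, to deliver $(G+G^T)/2$ rather than the stated $G+G^T-G\circ I$. In other words, the chain-rule computation itself is sound, but the ad hoc symmetric embedding used to repackage its output as a matrix is the source of the discrepancy, and any attempt to complete the claim as written is precisely where I expect the proof to founder.
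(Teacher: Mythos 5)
Your treatment is correct and essentially coincides with the paper's: the paper states \Cref{claim:grad_sym} only in order to refute it, and your coordinate chain-rule computation of the partials followed by the naive symmetric repackaging is exactly the derivation of the spurious formula given in \Cref{rm:wrong_claim}, where $\nabla\phi_s = \hvec(G+G^T-G\circ I)$ is lifted incorrectly as $\mat(D\nabla\phi_s)$. Your closing observation that the Frobenius pairing double-counts the off-diagonal contributions, so that the true Riesz representative of $\mathcal{D}\phi_{sym}(A)$ is $(G+G^T)/2$, is precisely the content of \Cref{thm:correct_symmetric_grad}.
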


\Cref{thm:correct_symmetric_grad} in the next section will demonstrate that this claim is false. Before that, however, note that $\S{n}{n}$ is a subspace of $\R{n}{n}$ with the induced inner product in~\Cref{def:frobenius}. Thus, the derivative in~\Cref{def:frechet_original} is naturally defined for all scalar functions of symmetric matrices. The \frechet Derivative of $\phi$ when restricted to the subspace $\S{n}{n}$ automatically accounts for the symmetry structure. 
Harville \cite{Harville1997} notes that the interior of $\S{n}{n}$ is empty, and states that hence, \Cref{def:frechet_original} is not applicable for symmetric matrices. The inference is incorrect, for while it is true that the interior of $\S{n}{n}$ in $\R{n}{n}$ is empty, the interior of $\S{n}{n}$ in $\S{n}{n}$ is non-empty, and this is the key to \Cref{def:frechet}.
For completeness, we re-iterate the definition of \frechet derivative of $\phi$ restricted to the subspace $\S{n}{n}$. 
\begin{definition}
\label{def:frechet}
The \frechet derivative of the function $\phi_{sym}:= \phi\big|_{\S{n}{n}}\;  \S{n}{n} \longrightarrow \mathbb{R}$ at $A$ in $\S{n}{n}$ is the unique linear transformation $\mathcal{D}\phi(A)$ in $\S{n}{n}$ such that
\begin{displaymath}
\lim_{\norm{H} \rightarrow 0} \; 
\dfrac{\abs{\phi(A + H) - \phi(A) - \mathcal{D}\phi(A)[H]}}{\norm{H}} \rightarrow 0,
\end{displaymath}
for any $H$ in $\S{n}{n}$. 
The Riesz Representation theorem then asserts the existence of the gradient $G_{sym}(A):= \nabla \phi_{sym}(A)$ in $\S{n}{n}$ such that
\begin{displaymath}
\ip{G_{sym}(A)}{H}_F = \mathcal{D}\phi(A)[H]
\end{displaymath}
\end{definition}

There is a natural relationship between the gradient in the larger space $\R{n}{n}$ and the restricted subspace $\S{n}{n}$. The following corollary states this relationship. 
\begin{corollary}
\label{cor:frechet_sym}
If $G \in \R{n}{n}$ be the gradient of $\phi: \R{n}{n} \longrightarrow \mathbb{R}$, then $G_{sym} = \mathrm{sym}(G)$ is the gradient  in $\S{n}{n}$ of $\phi_{sym}:= \phi\big|_{\S{n}{n}}\; \S{n}{n} \longrightarrow \mathbb{R}$.
\end{corollary}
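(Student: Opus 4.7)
The plan is to show directly that $\mathrm{sym}(G(A))$ satisfies the defining property of $G_{sym}(A)$ given in \Cref{def:frechet}, and then invoke uniqueness of the Riesz representer in $\S{n}{n}$ to conclude equality. No new machinery is needed beyond \Cref{def:frechet_original}, \Cref{def:frechet}, and \Cref{cor:orthogonality}.

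First, I would observe that the Fréchet derivative of $\phi_{sym}$ at $A \in \S{n}{n}$ exists and is simply the restriction of $\mathcal{D}\phi(A)$ to $\S{n}{n}$. This is almost immediate: the limit condition in \Cref{def:frechet_original} is valid for all $H \in \R{n}{n}$, so in particular it holds as $H \to 0$ along $\S{n}{n}$, which is exactly the condition in \Cref{def:frechet}. Hence
\[
\mathcal{D}\phi_{sym}(A)[H] = \ip{G(A)}{H}_F \quad \text{for every } H \in \S{n}{n}.
\]
Next, I would apply item 3 of \Cref{cor:orthogonality} to rewrite the right-hand side as $\ip{\mathrm{sym}(G(A))}{H}_F$ for every $H \in \S{n}{n}$. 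Since $\mathrm{sym}(G(A)) \in \S{n}{n}$, this exhibits $\mathrm{sym}(G(A))$ as a Riesz representer of the bounded linear functional $\mathcal{D}\phi_{sym}(A)$ acting on the inner-product space $\S{n}{n}$. Uniqueness of the Riesz representer (or, equivalently, item 2 of \Cref{cor:orthogonality} applied to the difference $G_{sym}(A) - \mathrm{sym}(G(A))$, which is itself symmetric) then forces $G_{sym}(A) = \mathrm{sym}(G(A))$.

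The only delicate point, and the one worth emphasising, is that the Riesz representation must be invoked within $\S{n}{n}$ rather than in $\R{n}{n}$: we need both that the candidate representer lies in $\S{n}{n}$ (which $G(A)$ generally does not, but $\mathrm{sym}(G(A))$ does) and that uniqueness is enforced only among representers in $\S{n}{n}$. This is precisely the subtlety that is fumbled in the derivations leading to the spurious formula $G + G^T - G \circ I$, where the vectorization $\mathbb{R}^{n(n+1)/2} \leftrightarrow \S{n}{n}$ fails to preserve the Frobenius inner product. Beyond this conceptual point, the rest of the argument is routine and requires no calculation.
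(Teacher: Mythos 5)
Your proposal is correct and follows essentially the same route as the paper's own proof: restrict the derivative identity $\mathcal{D}\phi(A)[H]=\ip{G(A)}{H}_F$ to increments $H\in\S{n}{n}$, use \Cref{cor:orthogonality} to replace $G(A)$ by $\mathrm{sym}(G(A))$, and conclude by uniqueness of the Riesz representer in $\S{n}{n}$. Your extra remarks (that the restricted \frechet derivative exists because the limit is taken along a subspace, and that uniqueness must be enforced within $\S{n}{n}$) only make explicit what the paper leaves implicit.
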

\begin{proof}
From \Cref{def:frechet_original}, we know that $\mathcal{D}\phi(A)[H] = \ip{G(A)}{H}_F$ for any $H$ in $\R{n}{n}$.
If we restrict attention to $H$ in $\S{n}{n}$, then, 
$$ \mathcal{D}\phi(A)[H] = \ip{G(A)}{H}_F = \ip{\nabla\phi_{sym}(A)}{H}_F.$$ This is true  for any $H$ in $\S{n}{n} $, so that  by 
\Cref{cor:orthogonality} and uniqueness of the gradient, 
$$G_{sym}(A) = \mathrm{sym}(G(A))$$

 is the gradient in $\S{n}{n}$.
\end{proof}

\subsection{An Illustrative Example 1}\label{ex:ex1}
This example will illustrate the difference between the gradient on  $\R{n}{n}$ and  $\S{n}{n}$. Fix a non-symmetric matrix $A$ in $\R{n}{n}$ and consider a  linear functional, $\phi: \R{n}{n} \longrightarrow \mathbb{R}$, given by  $\phi(X) = \tr(A^TX)$ for any $X$ in $\R{n}{n}$.

The gradient $\nabla \phi$ in $\R{n}{n}$ is equal to $A$, as defined by the \frechet derivative~\Cref{def:frechet_original}. However, if $\phi$ is restricted to $\S{n}{n}$, then observe that 
$\nabla\phi\big|_{\S{n}{n}} = \mathrm{sym}(A) = (A + A^T)/2$ according to \Cref{cor:frechet_sym} ! This is different from what is predicted by \Cref{claim:grad_sym} as well as the online matrix derivative calculator \cite{LaueMG2018}.

Thus the definition of the gradient of a real-valued function defined on $\S{n}{n}$ in \Cref{cor:frechet_sym}  is ensured to be symmetric.
We will demonstrate that~\Cref{claim:grad_sym} is incorrect. In fact, the correct symmetric gradient is the one given by the \frechet derivative in \Cref{def:frechet}, \Cref{cor:frechet_sym}, i.e.~$\mathrm{sym}(G)$. 
To do this, we first illustrate through a simple example how $G^{\text{claim}}_{s}$ as defined in~\Cref{claim:grad_sym} gives an \emph{incorrect} gradient. 

\subsection{An Illustrative Example 2}\label{ex:ex2}
\label{sec:ex2}

This short section is meant to highlight the inconsistencies that result from defining a symmetric gradient given by~\Cref{claim:grad_sym}.

We reconsider Geering's example \cite{Geering1976Aug} and demonstrate  the flaw in the argument that led to \Cref{claim:grad_sym}.
Define  $\phi: \R{2}{2} \longrightarrow \mathbb{R}$ given by  $\phi(A) = \det(A)$. For any symmetric matrix $A$ in $\S{2}{2}$, let  $A = \begin{pmatrix} x & y \\ y & z \end{pmatrix}$.

The gradient, defined by the \frechet derivative~\Cref{def:frechet_original} is $$G(A) = \det(A)A^{-T} = \begin{pmatrix} z & -y \\ -y & x \end{pmatrix}.$$ If $\phi$ is restricted to $\S{2}{2}$, then observe that $\mathrm{sym}(G) = \det(A)A^{-1} = \begin{pmatrix} z & -y \\ -y & x \end{pmatrix}$.

Geering identifies $A$ through the triple $[x, y, z]^T$ in $\mathbb{R}^3$, and consequently, we identify $\phi(A)$ with  $\phi_s(x, y, z) = xz - y^2$ as a functional on $\mathbb{R}^3$.

Then, $\nabla\phi_s$ in $\mathbb{R}^{3}$ is given by $[z, -2y, x]^T$ 

Geering identifies $\nabla\phi_s$ with a matrix $G_s = \begin{pmatrix} g_1 & g_2 \\ g_2 & g_3 \end{pmatrix}$ in $\S{2}{2}$, where 
$$g_1 = z, g_2= -2y, g_3 = x.$$  
Notice that then $G_s$ agrees with~\Cref{claim:grad_sym} that $G_s(A) =  \begin{pmatrix} z & -2y \\ -2y & z \end{pmatrix}$.

However, this identification  is inconsistent because the gradients $\nabla \phi_s$ in $\mathbb{R}^3$ and $G_s$ in $\S{2}{2}$ are not independent; rather, for any perturbation $H = \begin{pmatrix} h_1 & h_2 \\ h_2 & h_3 \end{pmatrix}$ identified by $h_ s= [h_1, h_2, h_3]^T$, the inner product 
\begin{equation}
\ip{\nabla \phi_s}{h_s}_{\mathbb{R}^3} = \ip{G_s}{H}_{F}
\label{eqn:ip_eqn}
\end{equation}

This relationship \eqref{eqn:ip_eqn} expresses the simple idea that follows from the chain and product rule for derivatives - that a small perturbation in the argument, identified either by $H$ or $h_s$,  leads to the same change in the value of the function, identified as either $\phi(A)$ or $\phi_s(x, y, z)$.

The crux of the issue is that Geering's identification that agrees with \Cref{claim:grad_sym}  violates the relationship, but the correct identification $g_1 = z, g_2= (-2y)/2, g_3 = x$ satisfies it.
Thus, we have shown using  Geering's example, that \Cref{claim:grad_sym} cannot hold and that $G_s = \mathrm{sym}(G)$ is the correct gradient.

In the subsequent sections, we shall prove \eqref{eqn:ip_eqn} in greater generality and rigour for any differentiable function $\phi: \S{n}{n} \longrightarrow \mathbb{R}$, and also show how the same incorrect identification leads to the spurious \Cref{claim:grad_sym}.

\section{Gradient of real-valued functions of symmetric matrices}
\label{sec:preliminaries}

Matrices in $\R{n}{n}$ can be naturally identified with vectors in $\mathbb{R}^{n^2}$. Thus a real valued function defined on $\R{n}{n}$ can be naturally identified with a real valued function defined on $\mathbb{R}^{n^2}$. Moreover, the inner product on $\R{n}{n}$ defined in~\Cref{def:frobenius} is naturally identified with the Euclidean inner product on $\mathbb{R}^{n^2}$.  This identification is useful when the goal is to find derivatives of scalar functions in $\R{n}{n}$. The scheme then is to identify the scalar function on $\R{n}{n}$ with a scalar function on $\mathbb{R}^{n^2}$, compute its gradient and use the identification to \emph{go back} to construct the gradient in  $\R{n}{n}$. In case of symmetric matrices, the equation in~\Cref{claim:grad_sym} is claimed to be the identification of the gradient in $\S{n}{n}$ after computations in $\mathbb{R}^m$, since symmetric matrices are identified with $\mathbb{R}^{m}$ where $m = n(n+1)/2$. In this section, we show that the claim is false.  We first begin by formalizing these natural \emph{identifications} we discussed in this paragraph.

\begin{definition}
\label{def:vec}
The function $\fvec: \R{n}{n} \longrightarrow \mathbb{R}^{n^2}$ given by
\begin{displaymath}
\fvec(A):= \begin{bmatrix}
A_{11} \\
\vdots \\
A_{n1}\\
A_{12} \\
\vdots \\,
\end{bmatrix}
\end{displaymath}
identifies a matrix $A$ in $\R{n}{n}$ with a vector $\fvec(A)$ in $\mathbb{R}^{n^2}$.
\end{definition}

This operation can be inverted in obvious fashion, i.e., given the vector, one can reshape to form the matrix through the $\mat$ operator defined below.
\begin{definition}
\label{def:mat}
$\mat: \mathbb{R}^{n^2} \longrightarrow \R{n}{n}$ is the function given by,
\begin{displaymath}
 \mat(\fvec(A)) = A,
 \end{displaymath}
 for any $A$ in $\mathbb{R}^{n^2}$.
\end{definition}

The subspace $\S{n}{n}$ of $\R{n}{n}$ is the subspace of all symmetric matrices and the object of investigation in this paper. Since this subspace has a dimension $ m = n(n+1)/2$, a symmetric matrix is naturally identified with a vector in $\mathbb{R}^{m}$. This identification is given by the elimination operation $P$ defined below. 
\begin{definition}
  Let $\mathcal{V}$ be the range of $\fvec$ restricted to $\S{n}{n}$ i.e.~$\mathcal{V} = \fvec\left(\S{n}{n}\right)$. The elimination operator $P$ is the function $P: \mathcal{V} \longrightarrow \mathbb{R}^{m}$ that  eliminates the redundant entries of a vector $v$ in $\mathcal{V}$.
\end{definition}

The operator $P$ lets us identify symmetric matrices in $\S{n}{n}$ with vector in $\mathbb{R}^{m}$ via the $\hvec$ operator defined below.
\begin{definition}
  The operator $\hvec$ is the function $\hvec: \S{n}{n} \longrightarrow \mathbb{R}^{m}$ given by
\begin{equation}
\label{eqn:P}
\hvec(A) = P\fvec(A),
\end{equation}   
for any symmetric matrix $A$ in $\S{n}{n}$.
\end{definition}

On the other hand, 
\begin{definition}
\label{def:duplication}
  the duplication operator $D: \mathbb{R}^{m} \longrightarrow \mathcal{V}$ given by
\begin{equation}
\label{eqn:D}
\fvec(A) = D \hvec(A), 
\end{equation} 
for any $A$ in $\S{n}{n}$ acts as the inverse of the elimination operator $P$.
\end{definition}

\begin{lemma}
\label{lemma:lifting}
Any $a$ in $\mathbb{R}^m$ can be lifted to a symmetric matrix in  $\S{n}{n}$.
\end{lemma}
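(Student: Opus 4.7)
The plan is to exhibit an explicit lifting using the machinery already set up in \Cref{def:vec,def:mat,def:duplication}, and verify that its output is symmetric. Given any $a$ in $\mathbb{R}^m$, the candidate lift is $A := \mat(Da)$, so the task reduces to checking (i) that $D$ is well-defined on all of $\mathbb{R}^m$, and (ii) that the resulting matrix $A$ lies in $\S{n}{n}$.

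First I would verify that the elimination operator $P: \mathcal{V} \longrightarrow \mathbb{R}^m$ is a bijection, so that its inverse $D$ makes sense as a map from all of $\mathbb{R}^m$ into $\mathcal{V}$. Since $\mathcal{V} = \fvec(\S{n}{n})$ and $\fvec$ is an injection (with $\mat$ as its left inverse), $\mathcal{V}$ is a subspace of $\mathbb{R}^{n^2}$ of dimension $m = n(n+1)/2$. The operator $P$ is linear, removes exactly the entries of a vector in $\mathcal{V}$ that duplicate other entries (by symmetry $A_{ij} = A_{ji}$), and hence has trivial kernel on $\mathcal{V}$. A dimension count then shows $P$ is a bijection onto $\mathbb{R}^m$, so $D = P^{-1}: \mathbb{R}^m \longrightarrow \mathcal{V}$ is defined on every $a$.

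With $D$ in hand, for any $a \in \mathbb{R}^m$ the vector $Da$ belongs to $\mathcal{V} = \fvec(\S{n}{n})$ by construction, so there exists a unique symmetric matrix $A \in \S{n}{n}$ with $\fvec(A) = Da$. By \Cref{def:mat}, this matrix is precisely $A = \mat(Da)$. Applying $P$ to $\fvec(A) = Da$ and using \eqref{eqn:P}, one gets $\hvec(A) = PDa = a$, confirming that $A$ is the desired lift of $a$.

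The only non-routine point is confirming that the image $\mathcal{V}$ has dimension $m$ and that $P$ loses no information on $\mathcal{V}$; once this is recognized, the lemma is a direct consequence of the definitions of $D$ and $\mat$. Since the lemma does not require uniqueness, an explicit construction — setting $A_{ii}$ and $A_{ij} = A_{ji}$ for $i < j$ equal to the corresponding coordinates of $a$ under a fixed enumeration — could also serve as an equivalent, component-level proof if preferred.
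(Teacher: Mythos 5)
Your proof is correct and takes essentially the same route as the paper, which simply exhibits $\mat(D(a))$ as the lift; your additional verification that $P$ is a linear bijection from $\mathcal{V}$ onto $\mathbb{R}^m$ (so that $D = P^{-1}$ is defined on all of $\mathbb{R}^m$) is a sound filling-in of detail the paper leaves implicit.
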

\begin{proof}
$\mat(D (a))$ lifts $a$ in $\mathbb{R}^m$ to a symmetric matrix in $\S{n}{n}$.
\end{proof}

We record some properties of the duplication operator $D$  that will be useful in proving our main theorem~\Cref{thm:correct_symmetric_grad} later.
\begin{lemma}
Let $D$ be the duplication operator defined in~\Cref{def:duplication}. The following are true.
\begin{enumerate}

 \item $\nullspace{D}  = \{0\}$ 
 \item $D^T \fvec(A) = \hvec(A) + \hvec(A^T) - \hvec(A \circ I) \; \forall\;  A \in \S{n}{n}$
 \item $D^TD$ in  $\S{m}{m}$ is a positive-definite, symmetric matrix 
 \item $(D^TD)^{-1} \;$ exists
 \end{enumerate}
 \label{lemma:D_props}
 \end{lemma}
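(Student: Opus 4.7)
The plan is to tackle the four parts in sequence, since each builds on the previous. Parts (1) and (2) are structural facts about the duplication operator, from which (3) and (4) follow automatically.

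For part (1), I would observe that $P$ is a left inverse of $D$: combining \eqref{eqn:P} and \eqref{eqn:D} yields $\hvec(A) = P\fvec(A) = PD\hvec(A)$ for every $A \in \S{n}{n}$, and since $\hvec$ is surjective onto $\mathbb{R}^m$ this forces $PD = I_m$. Injectivity of $D$, and hence $\nullspace{D} = \{0\}$, is then immediate.

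For part (2), the strategy is to write out the columns of $D$ explicitly. Adopting the standard convention in which $\hvec(A)$ stacks the lower-triangular entries $A_{ij}$ with $i \geq j$ in some fixed order, the column of $D$ associated with the index pair $(i,j)$ is a $0/1$ vector that routes $A_{ij}$ to the two positions $(i,j)$ and $(j,i)$ of $\fvec(A)$ when $i > j$, and to the single position $(i,i)$ when $i = j$. Reading off $(D^T \fvec(A))_{(i,j)}$ as the dot product of that column with $\fvec(A)$ then produces $A_{ij} + A_{ji}$ for $i > j$ and $A_{ii}$ for $i = j$. Matching this coordinate by coordinate against the right-hand side $\hvec(A) + \hvec(A^T) - \hvec(A \circ I)$ closes the argument, since $\hvec(A \circ I)$ retains only the diagonal of $A$ while the first two terms contribute $A_{ij} + A_{ji}$ at each position.

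Parts (3) and (4) then fall out quickly. Symmetry of $D^TD$ is automatic from $(D^TD)^T = D^TD$. Positive semi-definiteness follows from $x^T D^T D x = \norm{Dx}^2 \geq 0$, and part (1) upgrades it to strict positive-definiteness because $Dx = 0$ forces $x = 0$; invertibility of $D^TD$ is then standard, as a positive-definite matrix has strictly positive eigenvalues and hence nonzero determinant. The only step requiring genuine bookkeeping is part (2); no deep obstacle appears once the column structure of $D$ has been made explicit.
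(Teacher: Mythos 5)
Your proof is correct, but it is worth noting that the paper does not actually prove this lemma at all: it simply cites Magnus and Neudecker (1980), where these properties of the duplication matrix were originally established. Your argument is therefore a self-contained replacement rather than a variant of the paper's reasoning, and it holds up. Part (1) via the left-inverse identity $PD = I_m$ (which follows from $\hvec(A) = P\fvec(A) = PD\hvec(A)$ and surjectivity of $\hvec$ onto $\mathbb{R}^m$) is clean, and injectivity of the linear map $D$ then gives $\nullspace{D} = \{0\}$. For part (2), your identification of the columns of $D$ is justified by the paper's abstract definition: applying $\fvec(A) = D\hvec(A)$ to $A = E_{ij}+E_{ji}$ (for $i>j$) and $A=E_{ii}$ pins down each column as exactly the $0/1$ routing vector you describe, and the coordinatewise match with $\hvec(A)+\hvec(A^T)-\hvec(A\circ I)$ (which gives $A_{ij}+A_{ji}$ off the diagonal and $A_{ii}+A_{ii}-A_{ii}=A_{ii}$ on it) is right. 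Parts (3) and (4) follow from (1) in the standard way you indicate. The one convention you silently fix --- that $P$ retains the lower-triangular, column-stacked entries --- is not specified in the paper's definition of the elimination operator, but any consistent choice works, so this is harmless. What your approach buys over the paper's is transparency: the explicit column structure of $D$ makes part (2), the only identity actually used later (in \Cref{rm:wrong_claim} to explain where the spurious $G+G^T-G\circ I$ comes from), verifiable by inspection rather than by appeal to an external reference.
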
 
 
 \begin{proof}
  See~\cite{MagnusNeudecker1980}
 \end{proof}

Consider a real valued function $\phi: \R{n}{n} \longrightarrow \mathbb{R}$ and its restriction $\phi_{sym}:= \phi\big|_{\S{n}{n}}\;  \S{n}{n} \longrightarrow \mathbb{R}$. Then $\phi_{sym}$ can be identified with a scalar function $\phi_s:\mathbb{R}^m \longrightarrow \mathbb{R}$. Moreover, there is a relationship between the gradients calculated from the different representations of the function.
The next theorem formalizes this concept and demonstrates two fundamental ideas - 1) the notion of \frechet derivative naturally carries over to the subspace of symmetric matrices, hence there is no need to identify an equivalent representation of the functional in a lower dimensional space and, 2) if such an equivalent representation is constructed, a careful analysis leads to the correct gradient defined by the \frechet derivative.

\begin{theorem}
\label{thm:correct_symmetric_grad}
  Consider a real-valued function $\phi: \R{n}{n} \longrightarrow\mathbb{R}$ whose restriction $\phi_{sym}$ is the function $\phi_{sym} = \phi\big|_{\S{n}{n}} \mathbb{S}^{n \times n} \longrightarrow \mathbb{R}$. Let $G := \nabla \phi$ be the gradient of $\phi$, so that $\nabla \phi_{sym} = \mathrm{sym}(G)$ is the gradient of $\phi_{sym}$.
  \begin{enumerate}
   \item  $\phi_{sym}$ can be identified with a scalar function $\phi_s:\mathbb{R}^m \longrightarrow \mathbb{R} $ given by $\phi_s =  \phi_{sym}\circ\mat\circ D$ with $m=n(n+1)/2$. 

   \item  If $\nabla\phi_s$ in $\mathbb{R}^m$  is the gradient of $\phi_s$, then the symmetric matrix $G_s$ in $\S{n}{n}$ given by  \[G_s = \mat(D(D^TD)^{-1}\nabla\phi_s)\] is the correct ``symmetric gradient" of $\phi$ in the sense that \[\ip{G_s}{H}_F = \ip{\nabla \phi_s}{\hvec(H)}_{\mathbb{R}^m} = \ip{\mathrm{sym}(G)}{H}_F,\] \; for all  $H$ in $\mathbb{S}^{n \times n}$. Thus, $G_s = \mathrm{sym}(G)$. 
 \end{enumerate}
\end{theorem}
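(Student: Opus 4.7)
The plan is to prove the two parts in order, with part (1) being essentially definitional and part (2) reducing to a chain-rule computation followed by an algebraic identity involving $(D^T D)^{-1}$.

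For part (1), I would simply note that $D$ maps $\mathbb{R}^m$ into $\mathcal{V} = \fvec(\S{n}{n})$ by \Cref{def:duplication}, so $\mat \circ D$ takes $\mathbb{R}^m$ into $\S{n}{n}$, on which $\phi_{sym}$ is defined. Composing gives a well-defined scalar function $\phi_s : \mathbb{R}^m \to \mathbb{R}$, and since $\hvec$ inverts $\mat \circ D$ on $\S{n}{n}$, the identification is bijective in the sense that $\phi_{sym}(A) = \phi_s(\hvec(A))$ for every symmetric $A$.

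For part (2), the core observation is that both $\mat$ and $D$ are linear, so the chain rule yields
\[
\mathcal{D}\phi_s(a)[h] \;=\; \mathcal{D}\phi_{sym}\!\bigl(\mat(D(a))\bigr)\bigl[\mat(D(h))\bigr]
\]
for any $a, h \in \mathbb{R}^m$. Writing $A = \mat(D(a))$ and $H = \mat(D(h))$ and invoking the Riesz representation (\Cref{def:frechet}) on the right and the Euclidean Riesz theorem on the left, this becomes
\[
\ip{\nabla \phi_s(a)}{h}_{\mathbb{R}^m} \;=\; \ip{\mathrm{sym}(G(A))}{H}_F,
\]
which is one of the two claimed equalities after noting $h = \hvec(H)$. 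This step is the conceptual heart of the theorem --- it is precisely the relation \eqref{eqn:ip_eqn} whose failure Geering's identification violated.

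The remaining task is to show $\ip{G_s}{H}_F = \ip{\nabla\phi_s}{\hvec(H)}_{\mathbb{R}^m}$ for every $H \in \S{n}{n}$, where $G_s := \mat\bigl(D(D^TD)^{-1}\nabla \phi_s\bigr)$. I would set $v = (D^TD)^{-1}\nabla\phi_s$, and then use the fundamental fact $\ip{\mat(u)}{\mat(w)}_F = u^T w$ (which follows directly from \Cref{def:frobenius,def:vec}) to compute
\[
\ip{G_s}{H}_F \;=\; (Dv)^T \fvec(H) \;=\; v^T D^T D\, \hvec(H) \;=\; (\nabla\phi_s)^T \hvec(H),
\]
where the second equality uses $\fvec(H) = D\,\hvec(H)$ from \Cref{def:duplication} and the third uses that $(D^TD)^{-1}$ is symmetric (since $D^TD$ is, by \Cref{lemma:D_props}) together with cancellation of $(D^TD)^{-1}(D^TD)$. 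Chaining the two equalities gives $\ip{G_s}{H}_F = \ip{\mathrm{sym}(G)}{H}_F$ for all $H \in \S{n}{n}$. Since $G_s$ lies in $\S{n}{n}$ (because $Dv \in \mathcal{V}$, and $\mat$ sends $\mathcal{V}$ into $\S{n}{n}$) and $\mathrm{sym}(G) \in \S{n}{n}$, the uniqueness clause of \Cref{cor:orthogonality}(2) forces $G_s = \mathrm{sym}(G)$.

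The main obstacle I anticipate is not conceptual but bookkeeping: keeping straight which space each object lives in ($\mathbb{R}^m$, $\mathbb{R}^{n^2}$, $\mathcal{V}$, $\R{n}{n}$, $\S{n}{n}$) and invoking the correct inner product at each step. The role of $(D^T D)^{-1}$ is the crucial algebraic adjustment that was missing in the derivations leading to \Cref{claim:grad_sym} --- without it one would erroneously obtain $\mat(D\,\nabla\phi_s)$, which by \Cref{lemma:D_props}(2) is precisely $G + G^T - G\circ I$, revealing exactly how the spurious formula arises from omitting the Gram-matrix correction needed to reconcile the Euclidean inner product on $\mathbb{R}^m$ with the Frobenius inner product on $\S{n}{n}$.
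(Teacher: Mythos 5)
Your proposal is correct and follows essentially the same route as the paper: part (1) via the commutative identifications, part (2) via the chain rule for the linear maps to obtain $\ip{\nabla\phi_s}{\hvec(H)}_{\mathbb{R}^m}=\ip{\mathrm{sym}(G)}{H}_F$, and then the $(D^TD)^{-1}$ correction to identify $G_s$. The only cosmetic differences are that you apply the chain rule to $\phi_s=\phi_{sym}\circ\mat\circ D$ rather than to $\phi_{sym}=\phi_s\circ P\circ\fvec$ as the paper does, and you re-derive \Cref{lemma:reshape} inline instead of citing it.
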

 
Before proving~\Cref{thm:correct_symmetric_grad} we establish a few useful Lemmas that are interesting in their own right.~\Cref{rm:wrong_claim} will illustrate a plausible argument of~\Cref{claim:grad_sym}.

\begin{lemma}
\label{lemma:ip}
Let $A,B$ be two symmetric matrices in $\mathbb{S}^{n\times n}$. Then we have the following equivalence
\begin{displaymath}
 \ip{A}{B}_F  =  \ip{\fvec(A)}{\fvec(B)}_{\mathbb{R}^{n^2}} = \ip{D^TD\hvec(A)}{\hvec(B)}_{\mathbb{R}^m} = \ip{\hvec(A)}{D^TD\hvec(B)}_{\mathbb{R}^m},
\end{displaymath}
where $\ip{\cdot}{\cdot}_{\mathbb{R}^{n^2}}$, $\ip{\cdot}{\cdot}_{\mathbb{R}^{m}}$ are the usual Euclidean inner products.
\end{lemma}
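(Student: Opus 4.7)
The plan is to reduce everything to the identity $\fvec(A) = D\,\hvec(A)$ for symmetric $A$ (\Cref{def:duplication}) together with standard bilinear/adjoint manipulations. I would proceed in three short steps and not expect any real obstacle — the lemma is essentially a bookkeeping exercise, but doing it cleanly pays off when we invoke it inside \Cref{thm:correct_symmetric_grad}.

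First, I would establish $\ip{A}{B}_F = \ip{\fvec(A)}{\fvec(B)}_{\mathbb{R}^{n^2}}$. This is really a restatement of \Cref{def:frobenius}: writing $\tr(A^T B) = \sum_{i,j} A_{ij} B_{ij}$ and comparing with the componentwise definition of $\fvec$ in \Cref{def:vec} gives the Euclidean dot product of $\fvec(A)$ and $\fvec(B)$. No symmetry is needed here; this step holds for arbitrary matrices in $\R{n}{n}$.

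Second, I would invoke symmetry of $A$ and $B$ to write $\fvec(A) = D\,\hvec(A)$ and $\fvec(B) = D\,\hvec(B)$ via \eqref{eqn:D}. Substituting into the first equality gives
\begin{displaymath}
\ip{A}{B}_F = \ip{D\,\hvec(A)}{D\,\hvec(B)}_{\mathbb{R}^{n^2}},
\end{displaymath}
and then pushing $D$ across the Euclidean inner product using its adjoint yields
\begin{displaymath}
\ip{A}{B}_F = \ip{D^T D\,\hvec(A)}{\hvec(B)}_{\mathbb{R}^m}.
\end{displaymath}
This is the third equality in the chain.

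Finally, for the fourth equality I would either move $D^T$ back across, or simply cite that $D^T D \in \S{m}{m}$ is symmetric (\Cref{lemma:D_props}), so $\ip{D^T D\, u}{v}_{\mathbb{R}^m} = \ip{u}{D^T D\, v}_{\mathbb{R}^m}$ for any $u,v \in \mathbb{R}^m$. Setting $u = \hvec(A)$, $v = \hvec(B)$ closes the chain. The only subtlety worth flagging is that the middle two expressions are genuinely \emph{not} the Euclidean inner product on $\mathbb{R}^m$; the factor $D^T D$ is precisely the ``metric'' that encodes the doubling of off-diagonal contributions in the Frobenius inner product, and this is exactly the obstruction that will later reveal why the naive identification of $\nabla\phi_s$ with a matrix in $\S{n}{n}$ produces the spurious formula of \Cref{claim:grad_sym}.
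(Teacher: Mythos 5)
Your proposal is correct and follows essentially the same route as the paper: definition of the Frobenius inner product to pass to $\fvec$, the identity $\fvec = D\,\hvec$ on symmetric matrices, and the adjoint of $D$ to land in $\mathbb{R}^m$. Your final step (using the symmetry of $D^TD$ to justify the fourth expression) is a small addition the paper leaves implicit, and your closing observation about $D^TD$ acting as the metric on $\mathbb{R}^m$ matches the paper's remark following the lemma.
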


\begin{proof}
\begin{align*}
\ip{A}{B}_F  & =  \tr(A^TB) = \ip{\fvec(A)}{\fvec(B)}_{\mathbb{R}^{n^2}} \; \mathrm{by} \; \Cref{def:frobenius} \\
&= \ip{D\hvec(A)}{D\hvec(B)}_{\mathbb{R}^{n^2}} \; \text{from} \; \Cref{eqn:D} \\
&= \ip{D^TD\hvec(A)}{\hvec(B)}_{\mathbb{R}^m} \; \text{by definition of transpose operator}
\end{align*}
\end{proof}

The observation that $\ip{A}{B}_F \neq \ip{\hvec(A)}{\hvec(B)}$
is a crucial one and lies at the heart of the discrepancy alluded to in the title of this article. Instead, if we want to refactor the inner product of two elements in $\mathbb{R}^m$ into one in $\R{n}{n}$ one has 
\begin{lemma}
for any $a, b$ in $\mathbb{R}^m$,
\label{lemma:reshape}
\begin{displaymath}
\ip{a}{b}_{\mathbb{R}^m} = \ip{\mat(D(D^TD)^{-1}a)}{\mat(Db)}_F.
\end{displaymath}
\end{lemma}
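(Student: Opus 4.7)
The plan is to reduce the claim to \Cref{lemma:ip} together with elementary manipulation of the adjoint of $D$. First I would set $A := \mat(D(D^TD)^{-1}a)$ and $B := \mat(Db)$, and observe that both $A$ and $B$ lie in $\S{n}{n}$: indeed, the range of $D$ is by definition $\mathcal{V} = \fvec(\S{n}{n})$, so both $Db$ and $D(D^TD)^{-1}a$ lie in $\mathcal{V}$, and applying $\mat$ to any vector in $\mathcal{V}$ produces a symmetric matrix. This legitimizes the use of \Cref{lemma:ip}, whose first equality gives
\[
\ip{A}{B}_F = \ip{\fvec(A)}{\fvec(B)}_{\mathbb{R}^{n^2}} = \ip{D(D^TD)^{-1}a}{Db}_{\mathbb{R}^{n^2}},
\]
since by construction $\fvec(A) = D(D^TD)^{-1}a$ and $\fvec(B) = Db$.

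Next, I would move $D$ across the inner product using its transpose, obtaining
\[
\ip{D(D^TD)^{-1}a}{Db}_{\mathbb{R}^{n^2}} = \ip{(D^TD)(D^TD)^{-1}a}{b}_{\mathbb{R}^m} = \ip{a}{b}_{\mathbb{R}^m},
\]
where the invertibility of $D^TD$ invoked in the middle step is exactly item 4 of \Cref{lemma:D_props}. Chaining the two displays yields the claim.

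There is no real obstacle here; the only point that requires a moment of care is making sure the symmetric factor $(D^TD)^{-1}$ is placed on the correct side so that, upon transposing $D$, it cancels against $D^TD$ and leaves the plain Euclidean inner product $\ip{a}{b}_{\mathbb{R}^m}$. The asymmetric appearance of $(D^TD)^{-1}$ in the lemma's statement is precisely the mechanism that absorbs the mismatch between the Frobenius inner product on $\S{n}{n}$ and the Euclidean inner product on $\mathbb{R}^m$ observed after \Cref{lemma:ip}, and this is what will later power the proof that the correct ``symmetric gradient'' in \Cref{thm:correct_symmetric_grad} is $\mat(D(D^TD)^{-1}\nabla\phi_s)$ rather than the naive lifting $\mat(D\nabla\phi_s)$ that underlies \Cref{claim:grad_sym}.
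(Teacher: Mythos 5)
Your proof is correct and is essentially the paper's own argument read in the opposite direction: the paper starts from $\ip{a}{b}_{\mathbb{R}^m}$, inserts $(D^TD)(D^TD)^{-1}$, moves $D$ across the inner product, and finishes with \Cref{lemma:ip}, while you run the same three equalities from the Frobenius side back to the Euclidean side. The only addition is your explicit check that $\mat(Db)$ and $\mat(D(D^TD)^{-1}a)$ are symmetric so that \Cref{lemma:ip} applies, a point the paper leaves implicit.
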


\begin{proof}
\begin{align*}
\ip{a}{b}_{\mathbb{R}^m} &= \ip{(D^TD)(D^TD)^{-1}a}{b}_{\mathbb{R}^m} \; \text{by} \; \Cref{lemma:D_props} \\
&= \ip{D(D^TD)^{-1}a}{Db}_{\mathbb{R}^{n^2}} \\ 
&= \ip{\mat(D(D^TD)^{-1}a)}{\mat(Db) }_F \; \text{by} \; \Cref{lemma:ip}
\end{align*}
\end{proof}

We are now ready to prove, in full generality for $\phi_{sym}:= \phi\big|_{\S{n}{n}}: \mathbb{S}^{n \times n} \longrightarrow \mathbb{R}$ what we demonstrated through the example earlier -- that 
\Cref{claim:grad_sym} is false, and that the ``symmetric gradient" $G_s = \mathrm{sym}(G)$  computed using \Cref{cor:frechet_sym}. 

\begin{proof}{\Cref{thm:correct_symmetric_grad}

\begin{equation}
\begin{tikzcd}[column sep = 5em, row sep = 5em]
\mathbb{S}^{n \times n} \arrow[r, "\phi_{sym}:=\phi\big|_{\S{n}{n}} "] \arrow[d, "\fvec"] & \mathbb{R}  \\
\mathcal{V} \subset \mathbb{R}^{n^2} \arrow[u, "\mat", shift left = 1.5ex] 
\arrow[r, "P", shift left=0.5ex] & \mathbb{R}^m \arrow[l, "D", shift left=0.5ex] \arrow[u, "\phi_s"]  \\
\end{tikzcd}
\label{eqn:cd}
\end{equation} 

\begin{enumerate}
	\item The operators defined above establish the commutative diagram given in~\Cref{eqn:cd}. These yield the following relation \begin{equation}
			\phi_s =  \phi_{sym}\circ \mat \circ D,
		\end{equation} 
 
		where $\circ$ represents the usual composition of functions and $m=n(n+1)/2$. 
	\item 
		From~\Cref{eqn:cd},
		\begin{equation}
			\phi_{sym} = \phi_s \circ P \circ \fvec 
			\label{eqn:cd_relation}
		\end{equation}
 		Thus, for any symmetric matrix $H$, the chain-rule for \frechet derivatives yields
\begin{equation}
\mathcal{D}\phi_{sym}(H) = \mathcal{D}\phi_s \circ \mathcal{D}P \circ \mathcal{D}\fvec(H).\label{eqn:chain_rule_vec}
\end{equation}

By noting that $P$, $D$ and $\fvec$ are linear operators, the equation above yields the following relationship between the \frechet derivatives.
\begin{equation}
\mathcal{D}\phi_{sym}(H) = \mathcal{D}\phi_s \circ \hvec(H).\label{eqn:chain_rule}
\end{equation}
 With the usual inner products defined earlier, the Riesz Representation Theorem gives us the following relationship between the gradients,
\begin{equation}
\ip{\mathrm{sym}(G)}{H}_F = \ip{\nabla \phi_s}{\hvec(H)}_{\mathbb{R}^m} 
\label{eqn:ip_constraint}
\end{equation}
for each $H$ in $\S{n}{n}$ (see~\Cref{cor:frechet_sym} and~\Cref{ex:ex1} for the fact that the gradient $\nabla\phi_{sym}$ is given by $\mathrm{sym}(G)$). 

Since $\nabla \phi_s$ is a vector in $\mathbb{R}^m$, it can be lifted to the space of symmetric matrices to yield the matrix $G_s$ as in~\Cref{lemma:lifting}. However such a lifting fails to satisfy the inner product relationship given by~\cref{eqn:ip_constraint} and yields the incorrect gradient. See~\Cref{rm:wrong_claim}.

By \Cref{lemma:reshape} and the fact that $\mat(D\hvec(H)) = \mat(\fvec(H)) = H$, we find that the correct lifting is given by $G_s = \mat(D(D^TD)^{-1}\nabla \phi_s)$ in $\S{n}{n}$ such that,
\begin{equation}
\ip{\nabla \phi_s}{\hvec(H)}_{\mathbb{R}^m}  = \ip{G_s}{H}_F 
\label{eqn:G_s}
\end{equation}
for each $H$ in $\S{n}{n}$.

To show that $G_s$ is indeed the correct expression for the ``symmetric gradient" we need to show that $G_s = \mathrm{sym}(G)$. This follows immediately since we now have  $\ip{\mathrm{sym}(G)}{H}_F = \ip{G_s}{H}_F = \ip{\nabla \phi_s}{\hvec(H)}_{\mathbb{R}^m}$.
\end{enumerate}

This completes our proof of~\cref{thm:correct_symmetric_grad}.}
\end{proof}

\begin{remark}
\label{rm:wrong_claim}
This derivation also lays transparent how the spurious  \Cref{claim:grad_sym}  that has gained currency in the literature could be derived. First note that, from \Cref{eqn:ip_constraint}, \Cref{lemma:ip} and  the properties of the duplication operator $D$ stated in~\Cref{lemma:D_props}, we can relate $\nabla\phi_s$ to $\nabla\phi_{sym} = \mathrm{sym}(G)$ in the following way
\begin{equation}
\nabla\phi_s = D^T\fvec(\nabla \phi_{sym}) = \hvec(\nabla \phi_{sym}) +\hvec(\nabla \phi_{sym}^T) - \hvec(\nabla \phi_{sym} \circ I)
\label{eqn:grad_hvec_relns}
\end{equation}
This is equivalent to 
$$\nabla\phi_s = \hvec(G + G^T - G \circ I).$$

If we ignore \Cref{lemma:reshape} and the constraint \Cref{eqn:ip_constraint}, and instead naively use \Cref{lemma:lifting} to set $G_s = \mat(D\nabla\phi_s)$ as illustrated in the example in \Cref{sec:ex2} earlier, we get
$ G_s =  \mat(D\hvec(G + G^T - G \circ I))$ which simplifies to
$$G_s = \mat(\fvec(G + G^T - G \circ I)) = G + G^T - G \circ I$$
Thus, we have shown that the same fundamental flaw discovered in \Cref{sec:ex2} underpins the ``proof" of the spurious \Cref{claim:grad_sym}.
\end{remark}

\begin{remark}
While our analysis has assumed that the field in question is $\mathbb{R}$, the same arguments will be valid for matrix functionals defined over the complex field $\mathbb{C}$ with an appropriate modification of the  definition of the inner-product in \Cref{def:frobenius}.  
\end{remark}

\begin{remark}
A larger theme of this article is that the \frechet derivative over linear manifolds (subspaces) of $\R{n}{n}$ can be obtained from the \frechet derivative over $\R{n}{n}$ by an appropriate projection/restriction to the relevant linear manifold as shown in \Cref{cor:frechet_sym}. In this paper, the linear manifold was the set of symmetric matrices designated as $\S{n}{n}$. One can adapt the same ideas expressed here to obtain  the derivative over the subspace of skew-symmetric, diagonal, upper-triangular, or lower triangular matrices. However, note that this remark does not apply to the set of orthogonal matrices since it is not a linear manifold.
\end{remark}

Thus, the crux of the theorem was the recognition that while the lifting of an element in $\mathbb{R}^m$ to $\S{n}{n}$ follows \Cref{lemma:lifting}, the gradient in $\mathbb{R}^m$ must satisfy \Cref{eqn:ip_constraint} and instead must be lifted by \Cref{lemma:reshape}.

In the context of an application, the implication of \Cref{thm:correct_symmetric_grad} depends on the way the gradient is calculated, and what is the quantity of interest. If the quantity of interest is the gradient of a scalar function defined over $\S{n}{n}$, then the correct gradient is  the one given by \Cref{thm:correct_symmetric_grad}, whatever be the method used to evaluate it.

However, if the quantity of interest is not the gradient but the argument of the function, say one obtained by  gradient-descent in  an optimization algorithm, then the implication depends on how the argument is represented. If the argument is represented as  an element of $\S{n}{n}$, then the gradient $\mathrm{sym}(G)$ should be used. 
Most solvers and optimization routines, however do not accept matrices as arguments. In these cases, one actually works with the function $\phi_s$, whose gradient $\nabla \phi_s$ in $\mathbb{R}^m$ poses no complications. However, the output argument returned by the gradient-descent will still lie in $\mathbb{R}^m$ and will have to be lifted to $\S{n}{n}$ by  \Cref{lemma:lifting} to be useful.

\section{Conclusions}
\label{sec:conclusions}
In this article, we investigated the two different notions of a gradient that exist for a real-valued function when the argument is a symmetric matrix.
The first notion is the mathematical definition of a \frechet derivative on the space of symmetric matrices. The other definition aims to eliminate the redundant degrees of freedom present in a symmetric matrix and perform the gradient calculation in the space of reduced-dimension and finally map the result back into the space of matrices. We showed, both through an example and rigorously through a theorem,  that the problem in the second approach  lies in the final step as the gradient in the reduced-dimension space is mapped  into a symmetric matrix. 
Moreover, the approach does not recognize that \Cref{def:frechet}, restricted to $\S{n}{n}$, already accounts for the symmetry in the matrix argument; thus there is no need to identify an equivalent representation of the functional in a lower-dimensional space of dimension $m= n(n+1)/2$.
However, we demonstrated that if such an equivalent representation is constructed, then a consistent approach does lead to the correct gradient.
Since derivatives and gradients are  fundamental ideas, we feel there should be no ambiguity about their definitions and hence there is an urgent need to clarify these issues muddying the waters.
We thus lay to rest all the confusion, and unambiguously answer the question posed in the title of this article.




\bibliographystyle{unsrt}
\bibliography{references}

\end{document}